\title{Co-degrees resilience for perfect matchings in random hypergraphs}
\author[1]{Asaf Ferber\thanks{Email: asaff@uci.edu. Research is partially supported by an NSF grant 1954395.}}
\author[2]{Lior Hirschfeld\thanks{Email: liorh@mit.edu.}}
\affil[1]{Department of Mathematics, University of California Irvine, Irvine, CA}
\affil[2]{Department of Mathematics, Massachusetts Institute of Technology, Cambridge, MA}
\date{\today}
\theoremstyle{plain}
\newtheorem{theorem}{Theorem}[section]
\newtheorem{lemma}[theorem]{Lemma}
\newtheorem{remark}[theorem]{Remark}
\newtheorem{definition}[theorem]{Definition}
\begin{document}

\maketitle

\begin{abstract}
  In this paper we prove an optimal co-degrees resilience property for the binomial $k$-uniform hypergraph model $H_{n,p}^k$ with respect to perfect matchings. That is, for a sufficiently large $n$ which is divisible by $k$, and $p\geq C_k\log {n}/n$, we prove that with high probability every subgraph $H\subseteq H^k_{n,p}$ with minimum co-degree (meaning, the number of supersets every set of size $k-1$ is contained in) at least $(1/2+o(1))np$ contains a perfect matching.
\end{abstract}

\section{Introduction}

A perfect matching in a $k$-uniform hypergraph $H$ is a collection of vertex-disjoint edges, covering every vertex of $V(H)$ exactly once. Clearly, a perfect matching in a $k$-uniform hypergraph cannot exist unless $k$ divides $n$. From now on, we will always assume that this condition is met.

As opposed to graphs (that is, $2$-uniform hypergraphs) where the problem of finding a perfect matching (if one exists) is relatively simple, the analogous problem in the hypergraph setting is known to be NP-hard (see \cite{Karp}). Therefore, it is natural to investigate sufficient conditions for the existence of perfect matchings in hypergraphs.

A famous result by Dirac \cite{Dirac} asserts that every graph $G$ on $n$ vertices and with minimum degree $\delta(G)\geq n/2$ contains a Hamiltonian cycle (and therefore, by taking alternating edges along the cycle it also contains a perfect matching whenever $n$ is even). Extending this result to hypergraphs provides us with some interesting cases, as one can study `minimum degree' conditions for subsets of any size $1\leq \ell <k$. That is, given a $k$-uniform hypergraph $H=(V,E)$ and a subset of vertices $X$, we define its \emph{degree}
$$d(X)=|\{e\in E :X\subseteq e\}|.$$ Then, for every $1\leq \ell<k$ we define
$$\delta_{\ell}(H)=\min \{d(X) : X\subseteq V(H), |X|=\ell\},$$
to be the \emph{minimum $\ell$-degree of $H$}. A natural question is: Given $1\leq \ell<k$, what is the minimum $m_{\ell}(n)$ such that every $k$-uniform hypergraph on $n$ vertices with $\delta_\ell(H)\geq m_{\ell}(n)$ contains a perfect matching?

The above question has attracted a lot of attention in the last few decades. For more details about previous work and open problems, we will refer the reader to surveys by R\"{o}dl and Ruci\'{n}ski \cite{RodlRucinski} and Keevash \cite{Keevash}. In this paper we restrict our attention to the case where $\ell=k-1$. Following a long line of work in studying this property, which is expanded upon in the former survey,  K{\"u}hn and Osthus proved in \cite{DK} that every $k$-uniform hypergraph with $\delta_{k-1}\geq n/2+\sqrt[]{2n \log n}$ contains a perfect matching. This bound is optimal with an additive error term of $\sqrt[]{2n \log n}$. Note that one can view this result as follows: Start with a complete $k$-uniform hypergraph on $n$ vertices (this clearly contains a perfect matching). Imagine that an adversary is allowed to delete `many' edges in an arbitrary way, under the restriction that he/she cannot delete more than $r$ edges that intersect on a subset of size at least $(k-1)$. What then, is the largest $r$ for which the resulting hypergraph always contains a perfect matching? We refer to this value as the `$(k-1)$-local-resilience' of the hypergraph. The above mentioned result equivalently shows that such a hypergraph has `$(k-1)$-local-resilience' at least $n/2-\sqrt[]{2n \log n}$.


Here we study a similar problem in the random hypergraph setting. Let $H^k_{n,p}$ be a random variable which outputs a $k$-uniform hypergraph on vertex set $[n]$ by including any $k$-subset $X\in \binom{[n]}{k}$ as an edge with probability $p$, independently. The existence of perfect matchings in a typical $H^k_{n,p}$ is a well studied problem with a very rich history. Unlike for random graphs where finding a `threshold' for the existence of a perfect matching is quite simple, the problem of finding a `threshold' function $p$ for the existence of a perfect matching, with high probability, in the hypergraph setting is notoriously hard. After a few decades of study, in 2008 Johansson, Kahn and Vu \cite{JKV} finally managed to determine the threshold. Among their results, one of particular note is that for $p\geq C\log n/n^{k-1}$, whp $H^k_{n,p}$ contains a perfect matching. On the other hand, it is quite simple to show that if $p\leq c\log n/n^{k-1}$ for some small constant $c$, then a typical $H^k_{n,p}$ contains isolated vertices and thus has no perfect matchings.

In this note we determine the `$(k-1)$-local-resilience' of a typical $H^k_{n,p}$. Note that if $p=o(\log n/n)$ then whp there exists a ($k-1$)-set of vertices which is not contained in any edge and therefore, for the study of $(k-1)$-resilience, it is natural to restrict our attention to $p\geq C\log n/n$ (which is significantly above the threshold for a perfect matching as obtained in \cite{JKV}). The following theorem gives a complete solution to this problem for this range of $p$.

\begin{theorem}
  \label{main:codegree} Let $k\in \mathbb{N}$, let $\varepsilon>0$, and let $C:=C(k,\varepsilon)$ be a sufficiently large constant. Then, for all $p\geq \frac{C\log n}{n}$, whp a hypergraph $H^{k}_{n,p}$ is such that the following holds:
  Every spanning subhypergraph $H\subseteq H^{k}_{n,p}$ with $\delta_{k-1}(H)\geq (1/2+\varepsilon)np$ contains a perfect matching.
\end{theorem}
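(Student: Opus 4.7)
The plan is to adapt the absorbing method to the sparse random host. I aim to prove: (i) an \emph{absorbing lemma}, that whp every subhypergraph $H\subseteq H^k_{n,p}$ with $\delta_{k-1}(H)\ge (1/2+\varepsilon)np$ contains a small matching $M^*$ with the property that, for every $U\subseteq V(H)\setminus V(M^*)$ with $k\mid|U|$ and $|U|\le n^{1-\alpha}$, the hypergraph $H[V(M^*)\cup U]$ admits a perfect matching; and (ii) an \emph{almost-perfect matching lemma}, that whp every such $H$ (after setting aside $V(M^*)$ and a tiny random reservoir) contains a matching covering all but at most $n^{1-\alpha}$ vertices. Combining the two yields the theorem.

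\textbf{Absorbing lemma.} For each $k$-set $S$, define an \emph{$S$-absorber} to be a constant-size gadget $A_S$, disjoint from $S$, whose edge set admits two perfect matchings, one on $V(A_S)$ and the other on $V(A_S)\cup S$. A greedy extension argument using only the co-degree condition shows that any ``typical'' $H$ with $\delta_{k-1}(H)\ge (1/2+\varepsilon)np$ contains at least $\Omega(n^{|A_S|}p^{e(A_S)})$ absorbers for every $S$; the constant $1/2+\varepsilon$ enters because each step of the extension must find a new vertex in the intersection of two $(k-1)$-link neighborhoods, which the bound guarantees. I would then prove a \emph{robust} version of this count by combining the deterministic extension with Janson-type concentration for subgraph counts in $H^k_{n,p}$: whp, for every admissible $H$ and every $S$, the above lower bound holds. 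Random sampling of candidate absorbers followed by a Chernoff-based alteration then extracts the desired $M^*$, of size $O(n^{1-\beta})$ for some $\beta>0$, in which every $k$-set $S$ has $\Omega(\log n)$ absorbers.

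\textbf{Almost-perfect matching and main obstacle.} Given $M^*$ and a small random reservoir $R$, the residual hypergraph still has minimum co-degree close to $(1/2+\varepsilon)np$, with $(k-1)$-degrees tightly concentrated. A sparse R\"odl nibble or a semi-random greedy matching argument then produces a matching covering all but $o(n)$ vertices; swapping uncovered vertices into $R$ (using the surplus of edges guaranteed by the co-degree condition) reduces the uncovered set to size at most $n^{1-\alpha}$ with cardinality divisible by $k$, at which point $M^*$ absorbs the remainder. The main obstacle is the \emph{robust} absorber count: a naive union bound over all adversarial edge-deletion patterns is hopeless. The right approach is to parametrise adversaries by the (small) collection of $(k-1)$-tuples whose link has been heavily thinned, use the co-degree hypothesis to control how many such tuples there can be, and then show that whp $H^k_{n,p}$ possesses, for every $k$-set $S$, abundant $S$-absorbers that avoid every such thinned set. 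Carrying out this quantification in a form compatible with the Janson/Kim--Vu machinery, without losing the $\varepsilon$ slack, is the core technical difficulty.
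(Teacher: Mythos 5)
Your proposal takes a genuinely different route from the paper: you reach for the absorbing method, whereas the paper avoids absorption entirely. The paper randomly partitions $V(H)$ into $k$ equal classes (Lemma~\ref{random partition}), picks suitable permutations $\pi_i:[m]\to V_i$ to form the auxiliary bipartite graph $B_\pi$ of Definition~\ref{auxiliary-bipartite}, shows via McDiarmid's permutation inequality (Theorem~\ref{McDiarmid}) that one can choose $\pi$ so that $B_\pi$ is $(\varepsilon,p)$-pseudorandom (Lemma~\ref{Lemma:AlwaysPseudorandom}), and then reads off a perfect matching from Hall's theorem (Lemma~\ref{lemma:resilience for bipartite}). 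No reservoir, no absorbers, no nibble.

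More importantly, there is a concrete flaw in your sketch that makes me doubt the absorbing route is salvageable at this density. You claim that in the greedy construction of an $S$-absorber ``each step of the extension must find a new vertex in the intersection of two $(k-1)$-link neighborhoods, which the bound guarantees.'' It does not. With $p=\Theta(\log n / n)$ the link of a $(k-1)$-set has only $\Theta(np)=\Theta(\log n)$ vertices, so for two disjoint $(k-1)$-sets $X_1, X_2$ the expected number of common extension vertices in $H^k_{n,p}$ is $\Theta(np^2)=\Theta((\log n)^2/n)=o(1)$: typically the two links are \emph{disjoint}, and the co-degree lower bound $(1/2+\varepsilon)np$ says nothing to the contrary. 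Any $S$-absorber must contain auxiliary vertices that are covered by both of its two internal matchings, i.e. vertices of degree $2$ inside the gadget, and finding such a vertex is exactly the intersection-of-two-links problem that whp has no solution for a \emph{fixed} pair of $(k-1)$-sets. So a per-$S$ greedy extension driven solely by the co-degree lower bound produces at most $(\log n)^{O(1)}$ candidate structures before getting stuck, far from the $\Omega(n^{|A_S|}p^{e(A_S)})$ you assert, and it cannot be bootstrapped by ``parametrising adversaries by thinned $(k-1)$-tuples'' because under the hypothesis \emph{no} $(k-1)$-tuple is heavily thinned — the difficulty is the exponential number of admissible deletion patterns inside each tiny link, not a small set of bad tuples. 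The ``robust absorber count'' you flag as the core technical difficulty is, in this regime, not merely unfinished; the specific mechanism you propose to supply it is based on a false premise, and you would need a substantially different gadget design (or a much larger $p$) to proceed. The paper's permutation/Hall route sidesteps all of this.
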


Next, we show that the above theorem is asymptotically tight.

\begin{theorem}
  Let $k\in \mathbb{N}$, let $\varepsilon>0$, and let $C:=C(k,\varepsilon)$ be a sufficiently large constant. Then, for all $p\geq \frac{C\log n}{n}$, any hypergraph $H^{k}_{n,p}$ is such that the following holds: Whp there exists $H\subseteq H^k_{n,p}$ with $\delta_{k-1}(H)\geq (1/2-\varepsilon)np$ that does not contain a perfect matching.
\end{theorem}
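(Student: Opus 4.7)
The plan is to exploit the standard divisibility (parity) obstruction, adapted to the random setting. Fix a subset $A \subseteq [n]$ with $|A|$ odd and $|A| = \lfloor n/2 \rfloor + O(1)$; since $n$ is divisible by $k$, such an $A$ always exists (start from $|A| = \lfloor n/2 \rfloor$ and adjust by at most one vertex to correct the parity). Then set
\[
H := \{e \in H^{k}_{n,p} : |e \cap A| \text{ is even}\}.
\]
Any perfect matching $M \subseteq H$ would satisfy $\sum_{e \in M} |e \cap A| = |A|$; but each summand is even while $|A|$ is odd, so no such $M$ exists. Hence $H$ contains no perfect matching by construction.

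The content of the proof is then to check that whp $\delta_{k-1}(H) \geq (1/2 - \varepsilon)np$. Fix a $(k-1)$-set $S$ and set $j := |S \cap A|$. A vertex $v \in [n] \setminus S$ extends $S$ to an edge of $H$ precisely when $v \in A$ (if $j$ is odd) or $v \notin A$ (if $j$ is even); the number of such candidate extensions is therefore either $|A| - j$ or $|A^c| - (k-1-j)$, and in either case is at least $n/2 - k$. Since each candidate is independently realized as an edge of $H^{k}_{n,p}$ with probability $p$, the codegree $d_H(S)$ stochastically dominates a binomial random variable with expectation at least $(1/2 - o(1))np$. A standard Chernoff bound combined with the hypothesis $p \geq C\log n / n$ for $C = C(k,\varepsilon)$ sufficiently large gives $\Pr\bigl[d_H(S) < (1/2 - \varepsilon)np\bigr] \leq n^{-k}$, and a union bound over the $\binom{n}{k-1}$ choices of $S$ delivers the desired codegree condition whp.

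No serious obstacle is expected: once the parity construction is written down, the remainder is routine concentration. The only minor nuisances are (i) selecting $|A|$ of the correct parity within $O(1)$ of $n/2$, which needs a short case analysis on the parities of $n$ and $\lfloor n/2 \rfloor$, and (ii) tuning $C$ so that the per-set Chernoff tail beats the $n^{k-1}$ union bound, which forces $C$ to be of order $k/\varepsilon^{2}$. Both are entirely standard.
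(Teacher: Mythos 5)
Your proposal is correct and is essentially the same argument as the paper's sketch: your set $A$ plays the role of the paper's $V_1$, you keep exactly the edges with even intersection with $A$, the parity obstruction rules out a perfect matching, and the codegree lower bound is verified by a case split on the parity of $|S\cap A|$ followed by Chernoff and a union bound. The paper presents it as a sketch, but the construction, the reason there is no perfect matching, and the concentration step are identical to yours.
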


\begin{proof} [Sketch]
This proof is based on an idea of K\"uhn and Osthus outlined in \cite{DK}. Fix a partition of $V(H)=V_1\cup V_2$ into two sets of size roughly $n/2$, where $|V_1|$ is odd. Now, expose all the edges of $H^k_{n,p}$ and let $H$ be the subhypergraph obtained by deleting all the hyperedges that intersect $V_1$ on an odd number of vertices. Clearly, $H$ cannot have a perfect matching, as every edge covers an even number of vertices in $V_1$ and $|V_1|$ is odd. Now, we demonstrate that every $(k-1)$-subset of vertices still has at least $(1/2-\varepsilon)np$ neighbors in $H$. Indeed, given any $(k-1)$ subset $X$, we distinguish between two cases:

1. $|X\cap V_1|$ is even -- as we clearly kept all the edges of the form $X\cup \{v\}$, $v\in V_2$, and since $|V_2|\approx n/2$, by a standard application of Chernoff's bounds, $X$ is contained in at least $(1/2-\varepsilon)np$ many such edges as required.

2. $|X\cap V_1|$ is odd -- as we clearly kept all the edges of the form $X\cup \{v\}$, $v\in V_1$, and since $|V_1|\approx n/2$, a similar reasoning as in 1. gives the desired.

All in all, whp the resulting subhypergraph has $\delta_{k-1}(H)\geq (1/2-\varepsilon)np$ and does not contain a perfect matching.
\end{proof}

\section{Notation}
For the sake of brevity, we present the following, commonly used notation:

Given a graph $G$ and $X\subseteq V(G)$, let $N(X)=\cup_{x\in X}N(x)$. For two subsets $X,Y\subseteq V(G)$ we define $E(X,Y)$ to be the set of all edges $xy\in E(G)$ with $x\in X$ and $y\in Y$, and set $e_G(X,Y):=|E(X,Y)|$.

For a $k$-uniform hypergraph $H$ on vertex set $V(H)$, and for two subsets $X,Y\subseteq V(H)$ we define
$$d(X,Y)=|\{e\in E(H): X\subseteq e \text{ and } e\setminus X\subseteq Y\}|.$$

Given any $k$-partite, $k$-uniform hypergraph with parts $V(H)=V_1\cup \ldots \cup V_{k}$ of the same size $m$ we consider all $V_i$ to be disjoint copies of the integers $1$ to $m$, without loss of generality.

Finally, for every random variable $X$, we let $M(X)$ be its \emph{median}.

\section{Outline}

In this section we give a brief outline of our argument. Consider a typical $H^k_{n,p}$, and let $H\subseteq H^k_{n,p}$ with $\delta_{k-1}(H)\geq (\frac{1}{2}+\varepsilon)np$. In order to show that $H$ contains a perfect matching, we first show that some auxiliary bipartite graph $B$ contains a perfect matching. Then, we show that every perfect matching in $B$ can be translated into a perfect matching in $H$.

To this end, we first find a partition $V(H)=V_1 \cup \cdots \cup V_k$, with all $V_i$'s having the exact same size $m = \frac{n}{k}$, such that the following property holds: For every subset $X\in \binom{[n]}{k-1}$ and for every $1\leq i\leq k$ we have

$$d_H(X,V_i)\in (1\pm \varepsilon)\cdot \frac{d_{H}(X)}{k}.$$

Then, we let $H'$ be the $k$-partite, $k$-uniform subhypergraph induced by this partition of $V(H)$.

Now, given some set of permutations $\pi = \{ \pi _1, \pi_2, \cdots, \pi _{k-1}$ \}, $\pi _i = [m] \rightarrow V_i$, we can construct a bipartite graph $B_\pi(H')$ as follows:

The parts of $B_{\pi}(H')$ are $V_k$ and
$$X_\pi=\{\{\pi_1(i),\pi_2(i),\ldots,\pi_{k-1}(i)\}\mid 1\leq i \leq m\}.$$  

The edges of $B_{\pi}(H')$ consist of all pairs $xv\in X_\pi \times V_k$, for which $x\cup \{v\}\in E(H')$.

A moment's thought now reveals that a perfect matching in any such $B_\pi(H')$ corresponds to a perfect matching in $H'$, which itself corresponds to a perfect matching in $H$. Therefore, the main part of the proof consists of showing that, with high probability, there exists a $\pi$ such that $B_\pi(H')$ contains a perfect matching.

\section{Tools and Preliminary Results}

In this section we present some tools to be used in the proof of our main result.

\subsection{Chernoff's inequalities}

First, we need the following well-known bound on the
  upper and lower tails of the binomial distribution, outlined by Chernoff
  (see Appendix A in \cite{AlonSpencer}).

    \begin{lemma}[Chernoff's inequality]\label{lemma:Chernoff}
    Let $X \sim Bin(n, p)$ and let
    ${\mathbb E}(X) = \mu$. Then
      \begin{itemize}
        \item
        ${\mathbb P}\left ( X < (1 - a)\mu \right) < e^{-a^2\mu /2}$
        for every $a > 0$;
        \item ${\mathbb P} \left( X > (1 + a)\mu \right) <
        e^{-a^2\mu /3}$ for every $0 < a < 3/2$.
      \end{itemize}
    \end{lemma}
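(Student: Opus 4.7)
The plan is to use the standard Chernoff--Cram\'er moment generating function argument. Write $X = X_1 + \cdots + X_n$ with $X_i$ i.i.d.\ Bernoulli$(p)$, so $\mu = np$. For the upper tail, apply Markov's inequality to $e^{tX}$ for any $t>0$:
$$\mathbb{P}(X > (1+a)\mu) \;=\; \mathbb{P}(e^{tX} > e^{t(1+a)\mu}) \;\le\; \frac{\mathbb{E}[e^{tX}]}{e^{t(1+a)\mu}}.$$
By independence and the elementary bound $1+x \le e^x$,
$$\mathbb{E}[e^{tX}] \;=\; \bigl(1-p+pe^t\bigr)^n \;\le\; e^{np(e^t-1)} \;=\; e^{\mu(e^t-1)}.$$
Optimizing over $t>0$ by choosing $t=\log(1+a)$ then gives
$$\mathbb{P}(X > (1+a)\mu) \;\le\; \exp\!\bigl(\mu\bigl(a-(1+a)\log(1+a)\bigr)\bigr).$$

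For the lower tail, the same trick works with $t<0$: apply Markov to $e^{tX}$, upper bound the MGF as above (valid for all real $t$), and choose $t=\log(1-a)$ for $0<a<1$, which yields
$$\mathbb{P}(X < (1-a)\mu) \;\le\; \exp\!\bigl(\mu\bigl(-a-(1-a)\log(1-a)\bigr)\bigr).$$

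The main (and only genuinely non-mechanical) obstacle is then to verify the two elementary analytic inequalities
$$(1+a)\log(1+a) - a \;\ge\; \tfrac{a^2}{3} \quad \text{for } 0<a<3/2, \qquad (1-a)\log(1-a) + a \;\ge\; \tfrac{a^2}{2} \quad \text{for } 0<a<1,$$
which convert the sharp but unwieldy MGF-optimal bounds into the clean quadratic exponents stated in the lemma. Both follow from Taylor expansion around $a=0$: defining $f(a)$ and $g(a)$ as the left-hand sides minus the right-hand sides, one checks $f(0)=g(0)=f'(0)=g'(0)=0$ and then shows $f''(a),g''(a)\ge 0$ on the relevant intervals (for $f$, this uses $\frac{1}{1+a} - \frac{2}{3} \ge 0$ iff $a\le 1/2$ and a slightly more careful calculation up to $a=3/2$; for $g$ it reduces to $\frac{1}{1-a}\ge 1$). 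With these two inequalities in hand, the proof is complete.
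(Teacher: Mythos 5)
The paper does not give its own proof of this lemma; it simply invokes it as standard, citing Appendix~A of Alon--Spencer. Your argument is the standard Chernoff--Cram\'er moment-generating-function calculation, which is essentially the proof found there, so there is no genuine divergence in approach to report. However, two points in your write-up need tightening.

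First, the lemma asserts the lower-tail bound for \emph{every} $a>0$, while your substitution $t=\log(1-a)$ and the inequality $(1-a)\log(1-a)+a\ge a^2/2$ only make sense for $0<a<1$. You should add the (trivial) remark that for $a\ge 1$ the event $\{X<(1-a)\mu\}\subseteq\{X<0\}$ is empty, so the bound holds vacuously.

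Second, and more substantively, your stated verification of
$$f(a):=(1+a)\log(1+a)-a-\tfrac{a^2}{3}\ge 0 \quad \text{on } (0,3/2)$$
does not go through as written. You correctly observe $f(0)=f'(0)=0$ and $f''(a)=\tfrac{1}{1+a}-\tfrac23$, but $f''$ changes sign at $a=1/2$ and is \emph{negative} on $(1/2,3/2)$, so ``$f''\ge 0$ on the relevant interval'' is false, and you cannot conclude $f\ge 0$ by convexity alone. The gesture toward ``a slightly more careful calculation up to $a=3/2$'' needs to be made explicit, and here is the cleanest way: since $f''>0$ on $(0,1/2)$ and $f''<0$ on $(1/2,\infty)$, $f'$ is unimodal with maximum $f'(1/2)=\log(3/2)-1/3>0$; combined with $f'(0)=0$ and $f'(3/2)=\log(5/2)-1<0$, this shows $f'$ vanishes at a single interior point $a^*\in(1/2,3/2)$, so $f$ increases on $(0,a^*)$ and decreases on $(a^*,3/2)$. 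Hence $\min_{[0,3/2]}f$ is attained at an endpoint, and one checks directly that $f(0)=0$ and $f(3/2)=\tfrac52\log\tfrac52-\tfrac52\cdot\tfrac{3}{4}+\tfrac{3}{4}-\tfrac34 = \tfrac52\log\tfrac52-\tfrac{9}{4}>0$. With that endpoint check inserted, the argument is complete.
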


\begin{remark}\label{rem:hyper} These bounds also hold when $X$ is
  hypergeometrically distributed with
  mean $\mu $.
\end{remark}

In addition, we will make use of the following simple bound.

\begin{lemma}
  \label{lemma:fancy Chernoff}
Let $X\sim Bin(m,q)$. Then, for all $k$ we have
$$\Pr[X\geq k]\leq \left(\frac{emq}{k}\right)^k.$$
\end{lemma}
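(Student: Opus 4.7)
The plan is to prove this via the first-moment method applied to the indicator variables for $k$-subsets of $[m]$ being ``all successes.'' Writing $X = X_1 + \cdots + X_m$ with independent Bernoulli$(q)$ variables $X_i$, I would define, for each $S \in \binom{[m]}{k}$, the event $A_S = \{X_i = 1 \text{ for all } i \in S\}$, which has probability $q^k$. The key observation is that $\{X \geq k\} = \bigcup_{S \in \binom{[m]}{k}} A_S$, since $X \geq k$ holds if and only if there exists at least one $k$-subset of indices that are all ones.

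From here, I would apply the union bound to obtain
\[
\Pr[X \geq k] \;\leq\; \sum_{S \in \binom{[m]}{k}} \Pr[A_S] \;=\; \binom{m}{k} q^k,
\]
and then use the standard estimate $\binom{m}{k} \leq \left(\frac{em}{k}\right)^k$ (which follows from $\binom{m}{k} \leq \frac{m^k}{k!}$ and $k! \geq (k/e)^k$) to conclude
\[
\Pr[X \geq k] \;\leq\; \left(\frac{em}{k}\right)^k q^k \;=\; \left(\frac{emq}{k}\right)^k.
\]

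There is essentially no obstacle here: the only subtlety to check is the equivalence $\{X \geq k\} = \bigcup_S A_S$, which holds because having $k$ or more successes is precisely the statement that some set of $k$ positions is all-ones. Alternatively, one could phrase the argument via Markov's inequality applied to the random variable $Y = \binom{X}{k} = \sum_S \mathbbm{1}_{A_S}$, which satisfies $\mathbb{E}[Y] = \binom{m}{k} q^k$ and $Y \geq 1$ iff $X \geq k$; this gives the same bound. The proof is only a few lines and requires no probabilistic machinery beyond a union bound and a standard binomial coefficient estimate.
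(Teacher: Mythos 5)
Your proof is correct and is essentially the same as the paper's: the paper writes $\Pr[X\geq k]\leq \binom{m}{k}q^k\leq (emq/k)^k$ in one line, and your union-bound-over-$k$-subsets argument is precisely the justification for that first inequality, followed by the identical binomial coefficient estimate.
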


\begin{proof}
  Note that
  $$\Pr[X\geq k]\leq \binom{m}{k}q^k\leq \left(\frac{emq}{k}\right)^k$$
  as desired.
\end{proof}

\subsection{Talagrand's type inequality}

Our main concentration tool is the following theorem from McDiarmid \cite{McD}.

\begin{theorem}
  \label{McDiarmid}
Given a set $S$ of size $m$, we let $Sym(S)$ denote the set of all $m!$ permutations of $S$. Let $\{ B_1,\ldots,B_k \}$ be a family of finite non-empty sets, and let $\Omega=\prod_i Sym(B_i)$. Let $\pi= \{ \pi_1,\ldots,\pi_k \}$ be a family of independent permutations, such that for $i$, $\pi_i\in Sym(B_i)$ is chosen uniformly at random.

Let $c$ and $r$ be constants, and suppose that the nonnegative real-valued function $h$ on $\Omega$ satisfies the following conditions for each $\pi\in \Omega$.

\begin{enumerate}
  \item Swapping any two elements in any $\pi_i$ can change the value of $h$ by at most $2c$.
  \item If $h(\pi)=s$, there exists a set $\pi _{proof} \subseteq \pi$ of size at most $rs$, such that $h(\pi ') \geq s$ for any $\pi'\in \Omega$ where $ \pi ' \supseteq \pi _{proof}$.
\end{enumerate}

Then for each $t\geq 0$ we have
  $$\Pr[h\leq M(h(\pi))-t]\leq 2\exp\left(-\frac{t^2}{16rc^2M}\right).$$
\end{theorem}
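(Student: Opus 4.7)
My plan is to establish this concentration inequality by following the Talagrand convex-distance paradigm, adapted to a product of symmetric groups, with the certificate hypothesis shrinking the effective dimension from the ambient $\sum_i |B_i|$ down to $rM$. The argument would rest on two ingredients: an isoperimetric inequality for $\Omega$ in a weighted swap metric, and a lower bound on the distance from $\sigma$ to a median level set, obtained via the certificate.

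First, I would introduce an analog of Talagrand's convex distance on $\Omega$. For $\sigma \in \Omega$, $A \subseteq \Omega$, and a nonnegative weight vector $\alpha = (\alpha_{i,x})$ with $\|\alpha\|_2 \leq 1$, set $d_\alpha(\sigma, A) = \inf_{\pi \in A} \sum_{(i,x):\, \sigma_i(x) \neq \pi_i(x)} \alpha_{i,x}$, and $d_T(\sigma, A) = \sup_{\|\alpha\|_2 \leq 1} d_\alpha(\sigma, A)$. The key isoperimetric input, the symmetric-group analog of Talagrand's inequality, is $\Ex[\exp(d_T(\sigma, A)^2/16)] \leq 1/\Pr(A)$. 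I would prove this via a Doob martingale that reveals each $\pi_i$ one position at a time in a canonical order, using condition (1) (each swap changes $h$ by at most $2c$) to control the martingale increments, and a convex-duality argument to convert the resulting moment bound into a statement about $d_T$. This is the technical heart of the proof.

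Next, I would use condition (2) to lower bound $d_T(\sigma, A)$. Take $A = \{\pi : h(\pi) \geq M(h)\}$, so $\Pr(A) \geq 1/2$. Fix $\sigma$ with $h(\sigma) \leq M(h) - t$. For any $\pi \in A$ with $h(\pi) = s \geq M(h)$, let $\pi_{proof}$ be its certificate, of size at most $rs$. If $\sigma$ and $\pi$ agreed on every position of $\pi_{proof}$ then condition (2) would give $h(\sigma) \geq s > h(\sigma)$, absurd. Letting $D_\pi$ denote the positions in $\pi_{proof}$ where $\sigma$ and $\pi$ disagree, at most $|D_\pi|$ swaps applied to $\sigma$ within those positions yield a configuration matching $\pi$ on $\pi_{proof}$; condition (1) then gives $s - h(\sigma) \leq 2c|D_\pi|$, hence $|D_\pi| \geq t/(2c)$. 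Using the LP-duality formulation of $d_T$ and choosing weights adapted to the $\pi$-dependent certificates (together with a standard truncation to control the tail $s \gg M(h)$), one concludes $d_T(\sigma, A) \geq t/(2c\sqrt{rM(h)})$. Combining with the isoperimetric inequality above yields the claimed bound up to constants.

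The main obstacle is the isoperimetric inequality in the first step: Talagrand's original convex-distance inequality is phrased for independent product spaces, whereas the uniform measure on a product of symmetric groups is not a product measure at the level of the individual values $\pi_i(x)$. Lifting Talagrand's inequality to permutations — precisely McDiarmid's contribution in \cite{McD} — requires a delicate martingale argument on the symmetric group, which is where the real work lies. Once that machinery is in hand, conditions (1) and (2) feed into it essentially formally, and it is condition (2)'s shrinking of the relevant coordinate set to size $\leq rM$ that produces the $rM$ in the denominator of the exponent, in place of the ambient dimension $\sum_i |B_i|$ that an unrefined bounded-differences argument would give.
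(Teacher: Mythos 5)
The statement you are trying to prove is not established in the paper at all: it is quoted as a tool from McDiarmid \cite{McD} and used as a black box. So there is no internal proof to compare against; what you have written is a sketch of a re-derivation of McDiarmid's theorem.

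Your overall plan --- a convex-distance isoperimetric inequality on products of symmetric groups, followed by a certificate-based lower bound on the distance to a level set --- is indeed the structure of McDiarmid's argument, and you are right that the isoperimetric inequality (your step~1) is where the real technical work lies and cannot just be transplanted from the product-space case. However, there is a genuine gap in your step~3 that is not repaired by the parenthetical ``standard truncation.'' You take $A=\{\pi : h(\pi)\ge M\}$, fix $\sigma$ with $h(\sigma)\le M-t$, and invoke the certificate of each $\pi\in A$. But $|\pi_{proof}|$ is only bounded by $r\,h(\pi)$, which is unbounded over $A$, and in $d_T(\sigma,A)=\sup_{\alpha}\inf_{\pi\in A}\sum \alpha_{i,x}\mathbbm{1}[\sigma_i(x)\ne\pi_i(x)]$ the weight $\alpha$ must be fixed before the infimum over $\pi$; you cannot adapt $\alpha$ to the $\pi$-dependent sets $\pi_{proof}$. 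Passing to the $\inf_{\nu}\sup_{\alpha}$ form via minimax does not resolve this either, because $\nu$ can concentrate on $\pi$ with $h(\pi)\gg M$ and enormous certificates.

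The correct and standard way to run the argument reverses the roles: take $A=\{\pi : h(\pi)\le M-t\}$, and for a point $\sigma$ with $h(\sigma)\ge M$, use the certificate $J$ of $\sigma$ itself, of size $\le r\,h(\sigma)$. Any $\pi\in A$ must disagree with $\sigma$ on $J$ in at least $(h(\sigma)-(M-t))/(2c)$ positions, else $h(\pi)\ge h(\sigma)\ge M$. Choosing $\alpha$ uniform on $J$ gives
\[
d_T(\sigma,A)\ \ge\ \frac{h(\sigma)-(M-t)}{2c\sqrt{r\,h(\sigma)}}\ \ge\ \frac{t}{2c\sqrt{rM}},
\]
the last inequality because the middle expression is increasing in $h(\sigma)$ on $h(\sigma)\ge M$. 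Now the weight $\alpha$ depends only on the (random) $\sigma$, as it must, and the dependence on $h(\sigma)$ works in your favour rather than against you. Feeding this into the isoperimetric bound $\Pr[A]\cdot\Pr[d_T(\cdot,A)\ge u]\le e^{-u^2/K}$ and using $\Pr[h\ge M]\ge 1/2$ yields $\Pr[h\le M-t]\le 2\exp(-t^2/(4Kr c^2 M))$, which is the claimed inequality for $K=4$. So: right framework, but the certificate must be attached to the random point $\sigma$ above the median, not to elements of the target set $A$.
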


\subsection{Hall's theorem}

It is convenient for us to work with the following equivalent version of Hall's theorem (the proof is an easy exercise).
\begin{theorem}
  \label{thm:Hall}
Let $G=(A\cup B,E)$ be a bipartite graph with $|A|=|B|=n$. Then, $G$ contains a perfect matching if and only if the following holds:
\begin{enumerate}
  \item For all $X\subseteq A$ of size $|X|\leq n/2$ we have $|N(X)|\geq |X|$, and
  \item For all $Y\subseteq B$ of size $|Y|\leq n/2$ we have $|N(Y)|\geq |Y|$.
\end{enumerate}
\end{theorem}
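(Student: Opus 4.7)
The plan is to prove both directions of the biconditional, with the forward direction being essentially trivial and the reverse direction reducing to classical Hall's theorem.

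For the forward direction, suppose $G$ admits a perfect matching $M$. Then for any $X \subseteq A$ (resp.\ $X \subseteq B$), the vertices that $M$ matches to $X$ form $|X|$ distinct neighbors of $X$, so $|N(X)| \geq |X|$. In particular both conditions 1 and 2 hold without any restriction on $|X|$, let alone the weaker statements quantified only over sets of size at most $n/2$.

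For the reverse direction, I would invoke the classical Hall marriage theorem, which asserts that $G$ has a perfect matching provided $|N(X)| \geq |X|$ for every $X \subseteq A$, with no size restriction. It therefore suffices to verify this full Hall condition from hypotheses 1 and 2 alone. Fix $X \subseteq A$. If $|X| \leq n/2$ the inequality is exactly condition 1, so assume $|X| > n/2$ and set $Y := B \setminus N(X)$, so that $|Y| = n - |N(X)|$. By the very definition of $Y$ there are no edges of $G$ between $X$ and $Y$, and hence $N(Y) \subseteq A \setminus X$, which gives $|N(Y)| \leq n - |X|$.

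I then split on the size of $Y$. If $|Y| \leq n/2$, condition 2 applies to $Y$ and yields $|Y| \leq |N(Y)| \leq n - |X|$, which rearranges to $|N(X)| = n - |Y| \geq |X|$, as required. Otherwise $|Y| > n/2$; in this case I pick a subset $Y^{\ast} \subseteq Y$ of size $n - |X| + 1$, which satisfies $|Y^{\ast}| \leq n/2$ because $|X| > n/2$. Condition 2 applied to $Y^{\ast}$ then gives $|N(Y^{\ast})| \geq |Y^{\ast}| = n - |X| + 1$, whereas $N(Y^{\ast}) \subseteq N(Y) \subseteq A \setminus X$ forces $|N(Y^{\ast})| \leq n - |X|$, a contradiction. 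No serious obstacle arises: the whole argument is a short reduction to classical Hall, and the key trick is to use the "defect set" $Y = B \setminus N(X)$ to convert any potential Hall failure on the $A$-side into a Hall failure on a small enough subset of the $B$-side, where condition 2 is available to rule it out.
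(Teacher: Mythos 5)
The paper leaves this statement without proof (``the proof is an easy exercise''), so there is no argument to compare against; the reduction you give is precisely the standard one the authors must have in mind: the forward direction is trivial, and for the reverse direction you pass through the complement $Y = B\setminus N(X)$ to convert any Hall violation on a large subset of $A$ into a Hall violation on a small subset of $B$, then invoke classical Hall. One small point deserves a flag. In the subcase $|Y|>n/2$ you write that $Y^\ast\subseteq Y$ of size $n-|X|+1$ satisfies $|Y^\ast|\le n/2$ ``because $|X|>n/2$''; but for odd $n$ and $|X|=(n+1)/2$ one gets $|Y^\ast|=(n+1)/2>n/2$, so condition 2 is not available. This is not a flaw in your reasoning so much as an imprecision in the theorem as stated: for odd $n$ the statement is literally false. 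For example, with $n=3$ take $a_1$ adjacent to every vertex of $B$, $b_1$ adjacent to every vertex of $A$, and no other edges: every vertex has degree at least $1$, so both conditions hold for all sets of size $\le n/2=3/2$ (i.e. singletons), yet $a_2$ and $a_3$ both have $b_1$ as their only neighbor, so there is no perfect matching. The intended reading is surely $|X|\le\lceil n/2\rceil$, under which your argument goes through verbatim ($|Y^\ast|\le\lceil n/2\rceil$ and one also bounds $|N(Y^\ast)|\le n-|X|\le\lceil n/2\rceil-1<|Y^\ast|$), and under which the application in Lemma~\ref{lemma:resilience for bipartite} is unaffected; it is worth inserting the ceiling explicitly so the exercise is actually true as stated.
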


\subsection{Properties of random hypergraphs}

In this section we collect some properties that a typical $H^k_{n,p}$ satisfies. First, we show that all the $(k-1)$-degrees are `more or less' the same.

\begin{lemma}
  \label{degrees in random}
  Let $\varepsilon>0$ and let $k\geq 2$ be any integer. Then, whp we have
  $$(1-\varepsilon)np\leq \delta_{k-1}(H^k_{n,p})\leq \Delta_{k-1}(H^k_{n,p})\leq (1+\varepsilon)np,$$
  provided that $p=\omega(\log n/n)$.
\end{lemma}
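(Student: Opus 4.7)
The plan is to prove this as a direct application of Chernoff's inequality (Lemma~\ref{lemma:Chernoff}) followed by a union bound over all $(k-1)$-subsets of $[n]$. The point is that for a fixed $X\in\binom{[n]}{k-1}$, the degree $d(X)$ in $H^k_{n,p}$ counts the number of vertices $v\in [n]\setminus X$ such that $X\cup\{v\}\in E(H^k_{n,p})$, and each of these $n-k+1$ potential edges is present independently with probability $p$. Hence $d(X)\sim \textrm{Bin}(n-k+1,p)$ with mean $\mu=(n-k+1)p=(1-o(1))np$.

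First, I would fix a small constant $\varepsilon'>0$ (say $\varepsilon'=\varepsilon/2$) and apply both directions of Chernoff with deviation parameter $a=\varepsilon'$ to obtain
$$\Pr\bigl[|d(X)-\mu|>\varepsilon'\mu\bigr]\leq 2\exp\!\left(-\tfrac{(\varepsilon')^2\mu}{3}\right).$$
Since we assume $p=\omega(\log n/n)$, we have $\mu=\omega(\log n)$, so this upper bound is $n^{-\omega(1)}$. Next I would take a union bound over all $\binom{n}{k-1}\leq n^{k-1}$ choices of $X$, which gives a total failure probability of at most $n^{k-1}\cdot n^{-\omega(1)}=o(1)$. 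Therefore, whp every $(k-1)$-set $X$ satisfies $d(X)\in(1\pm\varepsilon')\mu$, and because $(1\pm\varepsilon')\mu\subseteq(1\pm\varepsilon)np$ for $n$ sufficiently large (using $\mu=(1-o(1))np$), the lemma follows.

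There is essentially no substantive obstacle here: the statement is standard and the only thing to check is that the assumption $p=\omega(\log n/n)$ makes the Chernoff exponent $\Theta(\varepsilon^2\mu)$ grow faster than $(k-1)\log n$ so that the union bound closes. This is immediate, and $k\geq 2$ is needed only so that $(k-1)$-sets make sense.
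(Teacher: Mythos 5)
Your proposal is essentially identical to the paper's proof: fix $X\in\binom{[n]}{k-1}$, observe $d(X)\sim\mathrm{Bin}(n-k+1,p)$, apply Chernoff's inequality using $\mu=\omega(\log n)$, and union bound over all $\binom{n}{k-1}$ sets. Your introduction of $\varepsilon'=\varepsilon/2$ to absorb the $(n-k+1)p$ versus $np$ discrepancy is a small extra touch of care that the paper glosses over, but the argument is the same.
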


\begin{proof}
  Let us fix some $X\in \binom{[n]}{k-1}$. Observe that $d(X)\sim Bin(n-k+1,p)$, and therefore $$\mu:=\mathbb{E}[d(X)]=(n-k+1)p.$$
  Hence, by Chernoff's inequalities we obtain that
  $$\Pr[d(X)\notin (1\pm \varepsilon)\mu]\leq 2e^{-\frac{\varepsilon^2\mu}{3}}=o(1/n^k).$$
  All in all, by taking a union bound over all sets $\binom{[n]}{k-1}$, we conclude that
  $$\Pr[\exists X\in \binom{[n]}{k-1} \text{ s.t. } d(X)\notin (1\pm\varepsilon)\mu]=o(1).$$
  This completes the proof.
\end{proof}

In the proof of our main result we will convert the problem of finding a perfect matching in $H$ into the problem of finding a perfect matching in some auxiliary bipartite graph. In order to do so, we wish to partition our hypergraph $H\subseteq H^k_{n,p}$ into $k$ equal parts satisfying some `degree assumptions', and then to define our auxiliary bipartite graph based on such a partition. In the following lemma we show that, given a $k$-uniform hypergraph $H$ with `relatively large' $(k-1)$-degree, a random partition of its vertices into equally sized parts satisfies these assumptions.

\begin{lemma}
  \label{random partition}
  For every $\varepsilon>0$ there exists $C:=C(\varepsilon)$ for which the following holds. Let $H$ be a $k$-uniform hypergraph on $n$ vertices, where $n$ is sufficiently large. Suppose that $\delta_{k-1}(H)\geq C\log n$ and that $n$ is divisible by $k$.
  Then, there exists a partition $V(H)=V_1\cup \ldots \cup V_{k}$ into sets of the exact same size satisfying the following property: For every subset $X\in \binom{[n]}{k-1}$ and for every $1\leq i\leq k$ we have $$d_H(X,V_i)\in (1\pm \varepsilon)\cdot \frac{d_{H}(X)}{k}.$$

\end{lemma}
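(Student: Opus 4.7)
My plan is to prove the lemma by the probabilistic method: sample a uniformly random partition of $V(H)$ into $k$ equal-sized parts and show that with positive probability every pair $(X,i)$ has $d_H(X,V_i)$ close to $d_H(X)/k$. A union bound will then yield the existence of a partition that works for all $(X,i)$ simultaneously.

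The first step is to rewrite $d_H(X,V_i)$ in a form amenable to Chernoff. Fix $X\in\binom{V(H)}{k-1}$ and define the neighborhood $N(X)=\{v\in V(H)\setminus X: X\cup\{v\}\in E(H)\}$, so that $|N(X)|=d_H(X)$. Since $|e\setminus X|=1$ for any edge $e\supseteq X$, we have
\[
d_H(X,V_i)=|N(X)\cap V_i|.
\]
Under a uniformly random equal partition, each $V_i$ is marginally a uniformly random subset of $V(H)$ of size $m=n/k$, so $|N(X)\cap V_i|$ has a hypergeometric distribution with parameters $(n, d_H(X), m)$ and mean $\mu_X := d_H(X)/k$.

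Now I would apply Chernoff's inequality (Lemma~\ref{lemma:Chernoff} together with Remark~\ref{rem:hyper} for the hypergeometric case) with deviation parameter $\varepsilon$:
\[
\Pr\bigl[\,|N(X)\cap V_i|\notin(1\pm\varepsilon)\mu_X\,\bigr]\ \le\ 2\exp\!\left(-\tfrac{\varepsilon^2\mu_X}{3}\right)\ \le\ 2\exp\!\left(-\tfrac{\varepsilon^2 C\log n}{3k}\right),
\]
where in the last step I used $d_H(X)\ge\delta_{k-1}(H)\ge C\log n$. Choosing $C=C(\varepsilon,k)$ large enough so that $\varepsilon^2 C/(3k)>k$, this probability is $o(n^{-k})$. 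Taking a union bound over the $\le n^{k-1}$ choices of $X$ and the $k$ choices of $i$, the expected number of bad pairs is $o(1)$, so a valid partition exists.

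The only real subtlety is ensuring the parts have exactly equal sizes rather than the more convenient independent assignment of vertices; this is precisely why I invoke Remark~\ref{rem:hyper} to apply Chernoff bounds to the hypergeometric variable $|N(X)\cap V_i|$ directly, avoiding any post-hoc rebalancing. Everything else is routine: the lemma is essentially a one-shot concentration-plus-union-bound argument, with the value of $C$ tuned to absorb the $n^{k-1}\cdot k$ union-bound loss.
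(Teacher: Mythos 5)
Your proof is correct and follows essentially the same approach as the paper: a uniformly random equal partition, observing that $d_H(X,V_i)$ is hypergeometric with mean $d_H(X)/k$, applying the Chernoff bound (via Remark~\ref{rem:hyper}) together with $d_H(X)\geq C\log n$, and closing with a union bound over all $X$ and $i$. The only cosmetic difference is that you rewrite $d_H(X,V_i)$ as $|N(X)\cap V_i|$ before invoking the hypergeometric bound, which the paper leaves implicit.
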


\begin{proof}
Let $H$ be a a $k$-uniform hypergraph on $n$ vertices, where $n$ is sufficiently large. Consider the random partition $V(H)=V_1\cup \ldots \cup V_{k}$ into sets of the exact same size. For some fixed $X$ and $i$, observe that $d_{H}(X,V_i)$ is hypergeometrically distributed with an expected value of $\frac{d_{H}(X)}{k}$. Therefore, we can use Lemma \ref{lemma:Chernoff} to determine that

$$\Pr[d_{H}(X,V_i)>(1+ \varepsilon)\cdot \frac{d_{H}(X)}{k}]\leq e^{-\varepsilon^2 \frac{d_{H}(X)}{k}/3}\leq e^{-k\log n}=n^{-k},$$

where the last inequality holds for a large enough $C$.

By applying a union bound over all possible $X$'s and $i$'s, we obtain that the probability of having such a set and an index $i$ is at most

$$\binom{n}{k-1}kn^{-k}=o(1).$$

Similarly, we obtain that

$$\Pr\left[\exists X \text{ and }i: d_{H}(X,V_i)<(1- \varepsilon)\cdot \frac{d_{H}(X)}{k}\right]=o(1).$$

This completes the proof.
\end{proof}

\begin{definition}
  \label{pseudorandom}
  Let $\varepsilon>0$, $p\in (0,1]$, and $m\in \mathbb{N}$. A bipartite graph $G=(A\cup B,E)$ with $|A|=|B|=m$ is called $(\varepsilon,p)$-\emph{pseudorandom} if it satisfies the following properties:
  \begin{enumerate}
    \item $\delta(G)\geq (1/2+\varepsilon)mp$,
    \item  for every $X\subseteq A$ and $Y\subseteq B$ with $|X|-1=|Y|\leq m/10$ we have $e_G(X,Y)\leq mp|X|/2$,
    \item  for every $X\subseteq A$ and $Y\subseteq B$ with $m/10\leq |X|-1=|Y|\leq m/2$ we have $e_G(X,Y)\leq (1/2+\varepsilon/2)mp|X|$
  \end{enumerate}
\end{definition}

\begin{definition}
  \label{auxiliary-bipartite}
  Let $H'$ be a $k$-partite, $k$-uniform hypergraph with parts $V(H')=V_1\cup \ldots \cup V_{k}$ of the same size $m$. Given a set of permutations $\pi=\{\pi_1,\pi_2,\ldots\pi_{k-1}\}$, $\pi_i:[m]\rightarrow V_i$, we construct an auxiliary bipartite graph, $B_\pi:=B_{\pi}(H')$, as follows:

Let $X_\pi=\{\{\pi_1(i),\pi_2(i),\ldots,\pi_{k-1}(i)\};1\leq i \leq m\}$ and $V_k$ be the parts of $B_\pi$. For every pair $xv$ with $x\in X_\pi$ and $v\in V_k$, we let $xv\in E(B_{\pi})$ iff $x\cup \{v\}\in E(H')$.
\end{definition}

\begin{remark}\label{remark:aux}
  Note that every edge in a given $B_{\pi}(H')$ with parts $x\in X_\pi$ and $v\in V_k$ corresponds to an edge $\pi_1 (i) \cup \pi_2 (i)\ldots \pi_{k-1} (i) \cup \{v\}$ in $H'$ for  some $1 \leq i \leq m$. Therefore, if $B_{\pi}(H')$ contains a perfect matching, clearly $H'$ contains a perfect matching as well. Having established this fact, our main goal is to show that there exists a $\pi$ for which $B_{\pi}$ contains a perfect matching.
\end{remark}

We now wish to demonstrate that given a `proper' $k$-partite, $k$-uniform hypergraph $H'$, a randomly chosen $\pi$ results in a $B_{\pi}(H')$  with a sufficiently large minimum degree. As will be seen soon, the `problematic' random variables that we need to control are $d_{B_{\pi}}(v)$, where $v\in V_k$. In order to prove that these variables concentrate about their expectation, we will use Theorem \ref{McDiarmid}.



For the sake of simplicity in the following lemma, we define this notation: Suppose that $H'$ is a $k$-partite, $k$-uniform hypergraph with parts $V(H')=V_1\cup \ldots \cup V_k$. Let $W_i:=V_1\times \ldots V_{i-1}\times V_{i+1}\times \ldots \times V_k$. For every $X\in W_i$ (note that $|X|=k-1$) define
$$\delta^*_{k-1}(H'):=\min\{d(X,V_i): X\in W_i, \text{ and }1\leq i\leq k \}.$$

\begin{lemma}
	\label{replace M}
    Let $0<\alpha<1/2$ and let $m\in \mathbb{N}$ be sufficiently large. Let $H'$ be a $k$-partite, $k$-uniform hypergraph with parts $V(H')=V_1\cup \ldots \cup V_{k}$ of the same size $m$. Suppose that $\delta^*_{k-1}(H')\geq 200/\alpha^2$. Let $B_\pi$ be the auxiliary-bipartite graph formed from the set of permutations $\pi:=\{\pi _1,id_2,..., id_{k-1}\}$, where $\pi _1$ is a random permutation of $V_1$ and each $id_j$ is the identity permutation of $V_j$. Let $\mu _v = \mathbb{E}[d_{B_{\pi}}(v)]$. Then, for every $v \in V_k$ we have
    $$M _v = M(d_{B_{\pi}}(v)) \in (1\pm \alpha)\mu _v.$$
\end{lemma}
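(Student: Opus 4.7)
The plan is to control the variance of $d_{B_\pi}(v)$ and then invoke Chebyshev's inequality; a variance bound of $O(\mu_v)$ combined with the hypothesis $\mu_v \geq 200/\alpha^2$ will make both tails $\{d_{B_\pi}(v) > (1+\alpha)\mu_v\}$ and $\{d_{B_\pi}(v) < (1-\alpha)\mu_v\}$ have probability strictly less than $1/2$, which by the definition of the median forces $M_v \in (1 \pm \alpha)\mu_v$.

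First I unwrap the setup. Fix $v \in V_k$, and for each $i \in [m]$ let $T_i := \{u \in V_1 : \{u, i_2, \ldots, i_{k-1}, v\} \in E(H')\}$, where $i_j$ denotes the $i$th element of $V_j$, and set $a_i := |T_i|$. Since $(i_2, \ldots, i_{k-1}, v) \in W_1$, the assumption $\delta^*_{k-1}(H') \geq 200/\alpha^2$ yields $a_i \geq 200/\alpha^2$ for every $i$. Writing $f_i := \mathbbm{1}[\pi_1(i) \in T_i]$, we have $d_{B_\pi}(v) = \sum_{i=1}^m f_i$ and $\mu_v = \sum_i a_i/m \geq 200/\alpha^2$.

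Next I estimate the variance. The diagonal part satisfies $\sum_i \mathrm{Var}(f_i) \leq \sum_i a_i/m = \mu_v$. For $i \neq j$, using $\Pr[\pi_1(i)=u, \pi_1(j)=u'] = 1/(m(m-1))$ for distinct $u,u'$, a direct computation gives $\mathrm{Cov}(f_i, f_j) = (a_i a_j - m|T_i \cap T_j|)/(m^2(m-1)) \leq a_i a_j/(m^2(m-1))$. Summing and using $\sum_i a_i = m\mu_v$ together with $\mu_v \leq m$ and $m$ sufficiently large gives $\sum_{i \neq j} \mathrm{Cov}(f_i, f_j) \leq \mu_v^2/(m-1) \leq 2\mu_v$, so $\mathrm{Var}(d_{B_\pi}(v)) \leq 3\mu_v$. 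Chebyshev's inequality now yields $\Pr[|d_{B_\pi}(v) - \mu_v| \geq \alpha\mu_v] \leq 3/(\alpha^2 \mu_v) \leq 3/200 < 1/2$; since each of $\Pr[d_{B_\pi}(v) > (1+\alpha)\mu_v]$ and $\Pr[d_{B_\pi}(v) < (1-\alpha)\mu_v]$ is dominated by this and hence is $< 1/2$, the defining property of the median forces $M_v \in (1 \pm \alpha)\mu_v$.

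The main technical point is that, because $\pi_1$ is a uniform random permutation (and not a sequence of independent samples), the indicators $f_i$ can be \emph{positively} correlated, so the covariance contribution does not vanish as it would for independent Bernoullis. The saving grace is that dropping the favorable $-m|T_i \cap T_j|$ term and summing crudely via the bound $\sum_{i \neq j} a_i a_j \leq (\sum_i a_i)^2$ still gives a covariance contribution of $O(\mu_v^2/m)$, which is $O(\mu_v)$ because $\mu_v \leq m$ --- just enough for Chebyshev to close the argument under the hypothesis $\mu_v \geq 200/\alpha^2$.
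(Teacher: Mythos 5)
Your proof is correct and follows essentially the same route as the paper's: decompose $d_{B_\pi}(v)$ into indicator variables, compute mean and variance by dropping the favorable $|T_i \cap T_j|$ term in the covariance, apply Chebyshev to get a failure probability below $1/2$, and conclude the median lies in $(1\pm\alpha)\mu_v$. The only (minor) difference is how you close the variance estimate: you observe $\mu_v \leq m$ directly to bound the covariance contribution by $2\mu_v$, whereas the paper splits the variance bound $\mu_v + 2\mu_v^2/(m-1)$ into two pieces each at most $\alpha^2\mu_v^2/200$, using $\mu_v \geq 200/\alpha^2$ and ``$m$ sufficiently large'' separately.
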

\begin{remark}
  The above lemma enables us to use $\mu _v$ instead of $M _v$ in Theorem \ref{McDiarmid} when it is applied to $d_{B_{\pi}}(v)$.
\end{remark}

\begin{proof}
	Consider the $B_\pi$, formed from the set of permutations $\pi:=\{\pi_1,id_2,..., id_{k-1}\}$, where $\pi _1$ is a random permutation of $V_1$ and each $id_j$ is the identity permutation of $V_j$. Let $v$ be some element in $V_k$. For each $1 \leq i \leq m$, let $A_i := \{{id_2 (i)}, {id_3 (i)}\ldots, {id_{k-1} (i)}\}$, and let $d_i(v)$ be the number of extensions of $\{v\} \cup A_i$ into $V_1$ (that is, the number of edges $e\in E(H')$ for which $\{v\}\cup A_i\subseteq e$). Moreover, let $d_v = \sum_i d_i(v)$, and for each $i$ define a indicator random variable $\mathbbm{1}_i$, where $\mathbbm{1}_i = 1$ if $\{\pi_1(i)\} \cup A_i \cup \{v\} \in E(H')$. Observe that $d_{B_{\pi}}(v)=\sum \mathbbm{1}_i$.

Our plan is to compute $\mu _v:=\mathbb{E}[d_{B_{\pi}}(v)]$ and $\sigma^2=Var(d_{B_{\pi}}(v))$ and to show that $\sigma^2\leq \alpha^2\mu _v^2/100$. The desired result will then be easily obtained as follows: First, note that by Chebyshev's inequality  we have $$\mathbb{P}[|d_{B_\pi} (v) - \mu _v| \geq \alpha \mu _v] \leq \frac{\sigma^2}{\alpha ^2 \mu_v^2}\leq 1/100.$$
Since with probability at least $99/100$ we have that $d_{B_{\pi}}(v)\in (1\pm \alpha)\mu_v$, we conclude that the median also lies in this interval.

It remains to compute $\mu_v$ and $\sigma^2$. Since $\mathbb{P}[\mathbbm{1}_i = 1] = \frac{d_i (v)}{m}$, by linearity of expectation we obtain

$$\mu _v = \sum_{i=1}^m \mathbb{E}[\mathbbm{1}_i] = \sum_{i=1}^m \frac{d_i (v)}{m} = \frac{d_v}{m}.$$

To compute the variance, note that

 \begin{align*}
Var\left(d_{B_\pi}(v)\right) &=Var\left(\sum_{i=1}^{m} \mathbbm{1}_i\right) = \sum_{i=1}^m Var\left(\mathbbm{1}_i\right) + 2 \sum_{i<j} Cov(\mathbbm{1}_i, \mathbbm{1}_j)\\
&\leq \mu _v + 2\sum_{i<j}\left(\mathbb{E}[\mathbbm{1}_i\mathbbm{1}_j]-\mathbb{E}[\mathbbm{1}_i]\mathbb{E}[\mathbbm{1}_j]\right)\\
&\leq \mu _v + 2\sum_{i<j}\left(\frac{d_i(v) d_j(v)}{m(m-1)} - \frac{d_i(v) d_j(v)}{m^2}\right) = \mu _v + 2\sum_{i<j}\left(\frac{d_i(v) d_j(v)}{m^2 (m-1)}\right)\\
&\leq \mu _v + 2\sum_{i=1}^m \sum_{j=1}^m\left(\frac{d_i(v) d_j(v)}{m^2 (m-1)}\right)\leq \mu _v + 2\sum_{i=1}^m\left(\frac{d_i(v) d_v}{m^2 (m-1)}\right)\\
& = \mu _v + \frac{2  d_v^2}{m^2 (m-1)}=\mu_v+\frac{2\mu_v^2}{m-1}.
\end{align*}

To complete the proof let us first observe that since $m$ is sufficiently large we have $\frac{2\mu_v^2}{m-1}\leq \alpha^2\mu_v^2/200$. Second, note that since $\mu_v\geq 200/\alpha^2$ we have that $\mu_v\leq \alpha^2\mu_v^2/200$. Plugging these estimates into the last line of the above equation gives us the desired.
\end{proof}

\begin{lemma}
  \label{Lemma:HighDegreeBipartite}
   For every $\varepsilon>0$ there exists $C:=C(\varepsilon)$ for which the following holds for sufficiently large $m\in \mathbb{N}$ and $p = C\log m / m$. Let $H'$ be a $k$-partite, $k$-uniform hypergraph with parts $V(H')=V_1\cup \ldots \cup V_{k}$ of the same size $m$. Suppose that $\delta^*_{k-1}(H')\geq(\frac{1}{2}+\varepsilon)mp$. Then there exists $\pi:=\{\pi_1,\pi_2,...\pi_{k-1}\}$, $\pi_i:[m]\rightarrow V_i$, s.t. $\delta(B_\pi)\geq (\frac{1}{2} + \frac{\varepsilon}{2})mp$.
\end{lemma}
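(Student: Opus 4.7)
The plan is to choose $\pi$ with only $\pi_1$ random and $\pi_2,\ldots,\pi_{k-1}$ fixed as identity permutations (under fixed identifications $V_j\cong [m]$ for $j\geq 2$), and to show that with probability $1-o(1)$ the resulting graph $B_\pi$ satisfies $\delta(B_\pi)\geq (\tfrac{1}{2}+\tfrac{\varepsilon}{2})mp$. A valid $\pi$ must then exist.

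The $X_\pi$ side of $B_\pi$ requires no work: every $x\in X_\pi$ is a transversal $(k-1)$-set meeting each of $V_1,\ldots,V_{k-1}$ in exactly one vertex, so
\[
d_{B_\pi}(x) \;=\; d_{H'}(x,V_k) \;\geq\; \delta^*_{k-1}(H') \;\geq\; (\tfrac{1}{2}+\varepsilon)mp
\]
for every $\pi$ whatsoever. So fix $v\in V_k$ and consider $h(\pi):=d_{B_\pi}(v)$. The expectation computation from the proof of Lemma \ref{replace M} gives $\mu_v=\tfrac{1}{m}\sum_{i=1}^m d_i(v)$, and the key observation is that each $\{v\}\cup A_i$ is again a transversal $(k-1)$-set, so $d_i(v)\geq \delta^*_{k-1}(H')\geq (\tfrac{1}{2}+\varepsilon)mp$ and hence $\mu_v\geq (\tfrac{1}{2}+\varepsilon)mp$. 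Choosing $\alpha:=\varepsilon/20$, the hypothesis $\mu_v\geq 200/\alpha^2$ of Lemma \ref{replace M} is satisfied provided $C=C(\varepsilon)$ is sufficiently large, so we obtain $M_v\in (1\pm \alpha)\mu_v$; in particular $M_v\geq M_0:=(1-\alpha)(\tfrac{1}{2}+\varepsilon)mp$, which strictly exceeds $a:=(\tfrac{1}{2}+\tfrac{\varepsilon}{2})mp$ by a margin of order $\varepsilon mp$.

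I would then apply Theorem \ref{McDiarmid} with $c=1$ (a single swap in $\pi_1$ changes $h$ by at most $2$) and $r=1$ (if $h(\pi)=s$, the $s$ values $\pi_1(i_j)$ at the indices $i_j$ with $\mathbbm{1}_{i_j}=1$ form a valid proof). Setting $t:=M_v-a>0$ and using that the map $M\mapsto (M-a)^2/M$ is increasing on $[a,\infty)$, we get
\[
\Pr\!\bigl[d_{B_\pi}(v)\leq a\bigr] \;\leq\; 2\exp\!\left(-\tfrac{(M_v-a)^2}{16\,M_v}\right) \;\leq\; 2\exp\!\left(-\tfrac{(M_0-a)^2}{16\,M_0}\right) \;\leq\; 2\exp\!\bigl(-\Omega(\varepsilon^2 mp)\bigr) \;\leq\; m^{-2},
\]
where the final inequality uses $mp=C\log m$ with $C=C(\varepsilon)$ large enough. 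A union bound over the $m$ choices of $v\in V_k$ then gives the desired high probability bound.

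The main conceptual obstacle is to verify that the artificial decision to randomize only $\pi_1$ does not cripple the argument on the $V_k$ side: the point is that each $\{v\}\cup A_i$ still involves one vertex from each of $V_2,\ldots,V_k$, so the transversal codegree bound $\delta^*_{k-1}(H')$ applies to every $d_i(v)$ and both Lemma \ref{replace M} and Theorem \ref{McDiarmid} kick in with the right quantitative strength. The second mild technical point — that $M_v$ being larger than the lower bound $M_0$ could in principle weaken Theorem \ref{McDiarmid} — is dispatched by the elementary monotonicity of $(M-a)^2/M$ on $[a,\infty)$, which ensures the concentration bound is always at least as good as at $M_0$.
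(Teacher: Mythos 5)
Your proposal is correct and follows essentially the same approach as the paper: randomize only $\pi_1$ while fixing $\pi_2,\ldots,\pi_{k-1}$ as identities, observe the deterministic lower bound for $d_{B_\pi}(x)$ on the $X_\pi$ side, replace the median by the mean via Lemma~\ref{replace M}, and apply Theorem~\ref{McDiarmid} with $c=r=1$ followed by a union bound over $v\in V_k$. The only differences are bookkeeping (e.g.\ $\alpha=\varepsilon/20$ vs.\ $\varepsilon/100$, and your monotonicity-of-$(M-a)^2/M$ argument in place of the paper's chain of substitutions), neither of which alters the substance of the argument.
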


\begin{proof}

Consider the $B_\pi$, formed from the set of permutations $\pi:=\{\pi_1,id_2,..., id_{k-1}\}$, where $\pi_1$ is random and $id_j$ is the identity permutation for $V_j$. As $\delta^*_{k-1}(H')\geq(\frac{1}{2}+\varepsilon) mp$, it is guaranteed that for all $x\in X_\pi$ we have (deterministically) that $d_{B_\pi}(x)\geq(\frac{1}{2}+\varepsilon) mp$.

Consider some $v\in V_k$ and observe from the proof of Lemma \ref{replace M}, under the same notation, that
$\mathbb{E}[d_{B_{\pi}}(v)]=\frac{d_v}{m}\geq (1/2+\varepsilon)mp.$

In order to complete the proof, we want to show that the $d_{B_{\pi}}(v)$'s are `highly concentrated' using Theorem \ref{McDiarmid}. To this end, let $h(\pi)=d_{B_\pi}(v)$ and note that swapping any two elements of $\pi_1$ can change $h$ by at most $2$. Moreover, note that if $h(\pi)\geq s$, then it is enough to specify only $s$ elements of $V$. Therefore, $h(\pi)$ satisfies the conditions outlined by Talagrand's type inequality with $c=1$ and $r=1$.

Now, let $\alpha=\varepsilon/100$, and observe that by Lemma \ref{replace M} we have that the median $M$ of $d_{B_{\pi}}(v)$ lies in the interval $(1\pm \alpha)\mathbb{E}[d_{B_{\pi}}(v)].$

Therefore, we have
  $$\Pr[h\leq (\frac{1}{2} + \varepsilon/2)mp]\leq \Pr[h\leq (1-\varepsilon/2)\mathbb{E}[d_{B_{\pi}}(v)]]$$ and the latter is at most
  $$\Pr[h\leq (1-\varepsilon/2)(1+\alpha)M]\leq \Pr[h\leq (1-\varepsilon/4)M].$$

  Now, by Theorem \ref{McDiarmid} we obtain that

  $$\Pr[h\leq (1/2 + \varepsilon/2)mp]\leq 2\exp\left(-\frac{(\varepsilon M/4)^2}{16M}\right).$$

Next, using (again) the fact that $M\in (1\pm \alpha)\mathbb{E}[d_{B_{\pi}}(v)]$ and that $\mathbb{E}[d_{B_{\pi}}(v)]=\Theta(mp)\geq C \log m$, we can upper bound the above right hand side by
  $$2\exp\left(-\Theta(mp)\right)\leq n^{-2}.$$

Finally, in order to complete the proof, we take a union bound over all $v\in V_k$ and obtain that whp $\delta(B_\pi)\geq (\frac{1}{2} + \frac{\varepsilon}{2})mp$.
\end{proof}

\begin{lemma}
 \label{Lemma:AlwaysPseudorandom}Let $\varepsilon>0$, $k\in \mathbb{N}$ and $p\geq C\log n/n$, where $C:=C(\varepsilon,k)>0$ is a sufficiently large constant. Then, a random hypergraph $H^k_{n,p}$ with high probability satisfies the following:
 For every $k$-partite, $k$-uniform subhypergraph $H' \subseteq H_{n,p}^k$ with parts $V(H')=V_1\cup \ldots \cup V_{k}$ of the same size $m:=\frac{n}{k}$, if $\delta^*_{k-1}(H')\geq (1/2+\varepsilon)mp$, there exists $\pi:=\{\pi_1,\pi_2,...\pi_{k-1}\}$, $\pi_i:[m]\rightarrow V_i$, s.t. $B_\pi$ is $(\varepsilon / 2,p)$-pseudorandom.
\end{lemma}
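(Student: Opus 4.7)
The plan is to decouple the three conditions of Definition \ref{pseudorandom}. Property 1 (the minimum-degree bound) is supplied by Lemma \ref{Lemma:HighDegreeBipartite}, which for any specific $H'$ satisfying the co-degree hypothesis produces some $\pi = \pi(H')$ with $\delta(B_\pi(H')) \geq (1/2 + \varepsilon/2)mp$. Since $B_\pi(H')$ is an edge-subgraph of $B_\pi(H^k_{n,p})$, properties 2 and 3 for $B_\pi(H')$ will follow from an analogous upper-tail statement about $H^k_{n,p}$ alone. Concretely, I intend to show that whp $H^k_{n,p}$ satisfies the following: for every family of permutations $\pi = (\pi_1, \ldots, \pi_{k-1})$ and every $X \subseteq X_\pi$, $Y \subseteq V_k$ with $|X| = s$ and $|Y| = s-1$,
$$e_{B_\pi(H^k_{n,p})}(X, Y) \leq \begin{cases} mps/2 & \text{if } s - 1 \leq m/10, \\ (1/2 + \varepsilon/2)mps & \text{if } m/10 \leq s - 1 \leq m/2. \end{cases}$$
Once this uniform statement is established, any $\pi$ supplied by Lemma \ref{Lemma:HighDegreeBipartite} immediately makes $B_\pi(H')$ pseudorandom.

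For a fixed pattern $(X, Y)$, where $X$ is a coordinate-disjoint collection of $s$ tuples in $V_1\times\cdots\times V_{k-1}$ and $|Y| = s-1$, the $s(s-1)$ potential hyperedges $T \cup \{v\}$ (with $T \in X$, $v \in Y$) are pairwise distinct $k$-subsets of $[n]$, so $e_{B_\pi(H^k_{n,p})}(X, Y) \sim \mathrm{Bin}(s(s-1), p)$ with mean at most $mps/2$. In the small-sets regime the target $mps/2$ is at least five times this mean, so Lemma \ref{lemma:fancy Chernoff} gives a failure probability at most $(e/5)^{mps/2}$. In the large-sets regime the target exceeds the mean by a multiplicative factor at least $1 + \varepsilon$ and the mean is $\Theta(smp)$, so standard multiplicative Chernoff yields failure probability at most $\exp(-\Omega(\varepsilon^2 smp))$. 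Using $mp \geq (C/k) \log m$, both bounds simplify to $m^{-Ds}$ with $D = D(C, k, \varepsilon)$ that can be made arbitrarily large by choosing $C = C(k, \varepsilon)$ large.

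To close, I union-bound over patterns. The key point is that the quantity $e_{B_\pi(H^k_{n,p})}(X, Y)$ depends only on $(X, Y)$ and not on $\pi$ beyond the requirement $X \subseteq X_\pi$; since each $\pi_i$ is a bijection, any such $X$ is automatically coordinate-disjoint. The number of $(X, Y)$ patterns with $|X| = s$, $|Y| = s - 1$ is at most $m^{s(k-1)} \cdot m^{s-1} = m^{sk-1}$, which is dominated by $m^{-Ds}$ once $D > k$, and summing over $s$ gives $o(1)$ total failure probability. The main obstacle I anticipate in this plan is exactly to sidestep the naive per-$\pi$ union bound, which would carry a factor $(m!)^{k-1}$ that no tail bound could absorb; recasting the bad event as indexed by the unordered pattern $(X, Y)$ rather than by $\pi$ replaces that factor by a harmless $m^{O(sk)}$ that Chernoff comfortably beats.
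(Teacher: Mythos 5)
Your proposal is correct and recapitulates the paper's argument: invoke Lemma~\ref{Lemma:HighDegreeBipartite} for Property~1, observe that Properties~2 and~3 are edge-upper-bounds and hence inherited from $H^k_{n,p}$ by any subhypergraph, and union-bound over the underlying patterns $(X,Y)$ — i.e.\ the tuples of disjoint $(k-1)$-sets together with the $(s-1)$-set $Y$ — rather than over permutations $\pi$, which is precisely how the paper kills the $(m!)^{k-1}$ factor. One small care for the write-up: since the lemma quantifies over \emph{every} choice of $k$-partition, the union should run over all collections of $s$ pairwise-disjoint $(k-1)$-subsets of $[n]$ (giving roughly $n^{ks}$ patterns, as the paper's $\binom{n}{k-1}^{x}\binom{n}{x}$ count does) rather than over tuples in a fixed $V_1\times\cdots\times V_{k-1}$; this is the same order and is absorbed identically by the $m^{-Ds}$ tail.
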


\begin{proof}
  Let $H'$ be such a subhypergraph. Our goal is to prove the existence of $\pi$ for which $B_{\pi}$ is $(\varepsilon /2,p)$-pseudorandom. That is, we want to show that $B_{\pi}$ satisfies the following properties:
  \begin{enumerate}
    \item $\delta(B_{\pi})\geq (1/2+\varepsilon / 2)mp$,
    \item  for every $X\subseteq X_\pi$ and $Y\subseteq V_k$ with $|X|-1=|Y|\leq m/10$ we have $e_{B_{\pi}}(X,Y)\leq mp|X|/2$,
    \item  for every $X\subseteq X_\pi$ and $Y\subseteq V_k$ with $m/10\leq |X|-1=|Y|\leq m/2$ we have $e_{B_{\pi}}(X,Y)\leq (1/2+\varepsilon/4)|X|mp$
  \end{enumerate}

  Let $\pi$ be obtained as in Lemma \ref{Lemma:HighDegreeBipartite}, and consider $B_{\pi}=(X_\pi \cup V_k,E)$. Clearly, Property $1$ is satisfied by the conclusion of Lemma \ref{Lemma:HighDegreeBipartite}.

  For Property $2$, let us fix $X\subseteq X_\pi$ and $Y\subseteq V_k$ of sizes $x$ and $y$ respectively where $x-1=y\leq m/10$. We now wish to establish an upper bound for the number of edges between them. Assume towards contradiction that $e_{B_{\pi}}(X,Y)>mpx/2$. Observe that this translates to the following: There exist $x$ disjoint sets $F_1,\ldots,F_x$, each of size exactly $k-1$ and a set $Y$ of size $x-1$, which is disjoint to all the $F_i$s, such that the number of edges in $H^k_{n,p}$, of the form $F_i\cup \{a\}$ where $a\in Y$, is larger than $mpx/2$. Let us show that whp $H^k_{n,p}$ has no such sets, thereby also guaranteeing that whp no such sets exist in any subhypergraph $H'\subseteq H^k_{n,p}$.

  First, let us fix such $F_1,\ldots, F_x$ and $Y$. Observe that the expected number of edges of the form $F_i\cup \{y\}$ in $H^k_{n,p}$ is exactly $xyp$.
  Therefore, by Lemma \ref{lemma:fancy Chernoff} we obtain
  $$\Pr[\# \text{ such edges }\geq xmp/2]\leq \left(\frac{2exyp}{xmp}\right)^{xmp/2}=\exp\left(-\frac{xmp}{2}\log \frac{m}{2ey}\right).$$

  By applying the union bound over all choice of $F_i$'s and $Y$ we obtain that the probability for having such sets which span at least $xmp/2$ edges of the form discussed above, is at most
  \begin{align*}
  \sum_{x=mp/2}^{m/10}&\binom{n}{k-1}^x\binom{n}{x}\exp\left(-\frac{xmp}{2}\log \frac{m}{2ey}\right)\\
  &\leq \sum_{x=mp/2}^{m/10}\left(\frac{en}{k-1}\right)^{kx}\left(\frac{en}{x}\right)^x \exp\left(-\frac{xmp}{2} \log\left(\frac{m}{2ex}\right)\right)\\
  &\leq \sum_{x=mp/2}^{m/10}\exp\left(kx\log\left(\frac{en}{k-1}\right)+x\log\left(\frac{en}{x}\right)-\frac{xmp}{2} \log\left({\frac{m}{2ex}}\right)\right)\\
  &\leq \sum_{x=mp/2}^{m/10}\exp\left(\left(k+1\right)x\log n-\frac{mpx}{2} \log\left({\frac{10}{2e}}\right)+O(1)\right)= o(1)
  \end{align*}
  where the last equality holds if we pick $p = C\log n/n$ where $C$ is a sufficiently large constant to satisfy $$\frac{mp}{2}\log \left(\frac{10}{2e}\right)>2(k+1)\log n$$

  Therefore, whp $B_\pi$ satisfies property 2.

  For property 3, let us fix $X\subseteq X_\pi$ and $Y\subseteq V_k$ of sizes $x$ and $y$ respectively where $m/10\leq x-1=y\leq m/2$. We now wish to establish an upper bound for the number of edges between them. Assume towards contradiction that $e_{B_{\pi}}(X,Y)>(1/2+\varepsilon/4)mpx$. Observe that this translates to the following: There exist $x$ disjoint sets $F_1,\ldots,F_x$, each of size exactly $k-1$ and a set $Y$ of size $x-1$, which is disjoint to all the $F_i$s, such that the number of edges in $H^k_{n,p}$, of the form $F_i\cup \{a\}$ where $a\in Y$, is larger than $(1/2+\varepsilon/4)mpx$. Let us show that whp $H^k_{n,p}$ has no such sets, thereby also guaranteeing that whp no such sets exist in any subhypergraph $H'\subseteq H^k_{n,p}$.

First, let us fix such $F_1,\ldots, F_x$ and $Y$. Observe that the expected number of edges of the form $F_i\cup \{y\}$ in $H^k_{n,p}$ is exactly $xyp$. Therefore, by Lemma \ref{lemma:Chernoff} we obtain
  $$\Pr[\# \text{ such edges } \geq (1/2+\varepsilon/4)mpx]\leq \exp\left(-\varepsilon^2xyp/40\right).$$

By applying the union bound we obtain that the probability to have such sets is at most
\begin{align*}
  \sum_{x=m/10}^{m/2}&\binom{n}{k-1}^x\binom{n}{x} \exp\left(-\varepsilon^2xyp/40\right)\\
  &\leq \sum_{x=m/10}^{m/2}n^{(k-1)x}n^{x} \exp\left(-\varepsilon^2xyp/40\right)\\
  &\leq \sum_{x=m/10}^{m/2}\exp\left((k-1)x\log n+x\log n-\varepsilon^2x^2p/40\right)=o(1)
\end{align*}
  where the last inequality holds if we pick $p=C\log n/n$ where $C$ is a sufficiently large constant to satisfy
  $$pm\varepsilon^2/400\geq 2k\log n.$$

  Therefore, whp $B_\pi$ satisfies property 3.

We can conclude that whp $B_\pi$ satisfies all three properties, and is $(\varepsilon / 2$, $p$)-pseudorandom. This completes the proof.
\end{proof}

Now that we know we can construct an $(\varepsilon / 2,p)$-pseudorandom bipartite graph $B_\pi$ from every subhypergraph $H$ with the properties outlined above, we will make use of the following lemma to show that every such $B_\pi$ must also contain a perfect matching. A similar proof appears in \cite{SV}.

\begin{lemma}
\label{lemma:resilience for bipartite}
 Every $(\varepsilon,p)$-pseudorandom bipartite graph contains a perfect matching.
\end{lemma}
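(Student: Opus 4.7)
The plan is to verify Hall's condition (Theorem~\ref{thm:Hall}) for $G=(A\cup B,E)$ directly from the three pseudorandom properties. Suppose for contradiction that $G$ has no perfect matching. By Theorem~\ref{thm:Hall}, and by the symmetry of the pseudorandom definition in the two parts (properties (2)--(3) hold with the roles of $A$ and $B$ swapped, as can be read off from the symmetric nature of the union-bound argument in the proof of Lemma~\ref{Lemma:AlwaysPseudorandom}), I may assume there exists $X\subseteq A$ with $|X|\leq m/2$ and $|N(X)|\leq |X|-1$.

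The case $|X|=1$ is ruled out immediately by property (1), since $\delta(G)\geq (1/2+\varepsilon)mp\geq 1$. For $|X|\geq 2$, I would pick any $Y\subseteq B$ with $N(X)\subseteq Y$ and $|Y|=|X|-1$ (padding with arbitrary vertices if $|N(X)|<|X|-1$). Every edge incident to $X$ has its other endpoint in $N(X)\subseteq Y$, so summing the degree lower bound in property (1) over $X$ gives
$$e_G(X,Y)\;\geq\;\sum_{x\in X}d_G(x)\;\geq\;\bigl(\tfrac{1}{2}+\varepsilon\bigr)mp\,|X|.$$
I would then split into two sub-cases according to $|X|-1$: if $|X|-1\leq m/10$, property (2) says $e_G(X,Y)\leq mp|X|/2$, contradicting the bound above since $\varepsilon>0$; if $m/10\leq |X|-1\leq m/2$, property (3) says $e_G(X,Y)\leq (1/2+\varepsilon/2)mp|X|$, again contradicting the bound above since $\varepsilon/2<\varepsilon$. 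Either sub-case forces a contradiction, so Hall's condition on the $A$-side holds, and by the symmetric argument the $B$-side does too.

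There is essentially no analytic obstacle here; the lemma is a clean pigeonhole between the per-vertex degree lower bound guaranteed by property (1) and the pair-of-sets edge upper bounds in (2)--(3), which are designed precisely to rule out concentrating $(1/2+\varepsilon)mp|X|$ edges into a neighborhood of size $|X|-1$. The only things to keep track of are the boundary $|X|-1=m/10$ (where either property applies and either yields the contradiction), the trivial $|X|=1$ case (handled by the minimum degree), and a one-line justification that the pseudorandom bounds really are symmetric in the bipartition so that Hall's condition on the $B$-side follows from the same proof.
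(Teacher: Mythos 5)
Your proposal is correct and takes essentially the same approach as the paper: a contradiction via the two-sided Hall condition, a lower bound $e_G(X,Y)\geq(1/2+\varepsilon)mp|X|$ extracted from property (1) once $N(X)\subseteq Y$, and an incompatible upper bound supplied by property (2) or (3) according to the size of $|X|-1$. You compress the paper's three cases into two (the paper treats $|X|\leq mp/2$ separately using only the minimum degree, whereas you let property (2) absorb that range, which is fine since the definition imposes no lower bound on $|X|$), and you explicitly flag the $A$/$B$ symmetry that the paper buries in ``without loss of generality''; the first display's leftmost ``$\geq$'' should be ``$=$'', but your surrounding sentence makes clear that equality is what is being used.
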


\begin{proof} Let $G = (A \cup B, E)$ be an $(\varepsilon,p)$-pseudorandom bipartite graph with $|A| = |B| = m$. If $G$ does not contain a perfect matching, then it must violate the condition in Theorem \ref{thm:Hall}. That is, without loss of generality, there exists some \(X\subseteq A\) of size \(x\leq m/2\) and \(Y\subseteq B\) of size \(x-1\) such that \(N_{G}(X)\subseteq Y\). In particular, as \(\delta(G)\geq(1/2+\varepsilon)mp\) by property 1, it follows that $e_G(X,Y)\geq (1/2+\varepsilon)mpx$. In order to complete the proof we show that $G$ does not contain two such sets for all $1\leq x\leq m/2$.

We distinguish between three cases: First, assume $x \leq mp/2$.
As $|Y|\leq x<(1/2+\varepsilon)mp\leq\delta(G)$, it follows that \(N_{G}(X)\not\subseteq Y\).

Second, assume that
\(mp/2 \leq x \leq m/10 \).
By property 2, $e_G(X,Y)\leq mpx/2<(1/2+\varepsilon)mpx$, which is clearly a contradiction. Lastly, consider the case
\(m / 10\leq x \leq m/2\).
By property 3, $e_G(X,Y)\leq (1/2+\varepsilon/2)xmp<(1/2+\varepsilon)mpx$, which is also a contradiction.
This completes the proof.
\end{proof}

\section{Proof of Theorem \ref{main:codegree}}

Now we are ready to prove Theorem \ref{main:codegree}.
\begin{proof}
Let $k\in \mathbb{N}$, $\varepsilon>0$ and $p\geq C\log n/n$, for a sufficiently large $C$. Observe that, by Lemma \ref{degrees in random}, whp a hypergraph $H^k_{n,p}$ satisfies
$$(1-\varepsilon)np\leq \delta_{k-1}(H^k_{n,p})\leq \Delta_{k-1}(H^k_{n,p})\leq (1+\varepsilon)np.$$

Let $H\subseteq H^k_{n,p}$ be any subhypergraph with $\delta_{k-1}(H)\geq (1/2+\varepsilon)np$. We wish to show that $H$ contains a perfect matching.

To this end, as was previously explained in the outline, we will construct a bipartite graph in such a way that each perfect matching of this graph corresponds to a perfect matching of $H$.

To do so, let $\alpha>0$ where $(1-\alpha)(1/2+\varepsilon)\geq 1/2+\varepsilon/2$, and let us take a partitioning
  $[n]=V_1\cup \ldots \cup V_k$ into sets of the exact same size for which the following holds: For every subset $X\in \binom{[n]}{k-1}$ and for every $1\leq i\leq k$ we have $$d_H(X,V_i)\in (1\pm \alpha)\cdot \frac{d_H(X)}{k}.$$ In particular, for all $X\in \binom{[n]}{k-1}$ and all $1\leq i\leq k$, we have
  $$d_H(X,V_i)\geq (1/2+\varepsilon/2)mp,$$ where $m=\frac{n}{k}$. The existence of such a partitioning is guaranteed by Lemma \ref{random partition}.

  Next, let $H'$ be the resulting $k$-partite, $k$-uniform subhypergraph induced by the above partitioning. Recall that
$$\delta^*_{k-1}(H'):=\min\{d(X,V_i): X\in W_i, \text{ and }1\leq i\leq k \},$$
where $W_i=V_1\times \ldots \times V_{i-1}\times V_{i+1}\times \ldots \times V_k$.

Clearly, $\delta^*_{k-1}(H') \geq (1/2+\varepsilon/2)mp$. Therefore, Lemma \ref{Lemma:AlwaysPseudorandom} guarantees that there exists an auxiliary bipartite graph $B_\pi(H')$ (as defined in \ref{auxiliary-bipartite}) that is $(\varepsilon/4,p)$-pseudorandom. By Lemma \ref{lemma:resilience for bipartite}, such a $B_\pi$ would contain a perfect matching and therefore, by Remark \ref{remark:aux}, $H'$ must also contain a perfect matching. This completes the proof.
\end{proof}

{\bf Acknowledgments.} We are grateful to the referees for their valuable comments which were instrumental in revising this paper. The second author also greatly appreciates the support of the MIT math department in facilitating this Undergraduate Research
Opportunity.

\end{document}